\newcommand{\ZZ}{\mathbb{Z}}
\newcommand{\RR}{\mathbb{R}}
\newcommand{\mmat}[1]{\left(\begin{smallmatrix} #1 \end{smallmatrix}\right)}
\newcommand{\choice}[1]{\left\{\begin{matrix} #1 \end{matrix}\right.}
\newcommand{\mset}[2]{\left\{ #1 \;\middle|\; #2  \right\}}
\newcommand{\sprod}[1]{\left\langle #1 \right\rangle}
\newcommand{\floor}[1]{\left\lfloor #1 \right\rfloor}
\newcommand{\cone}{\operatorname{\mathsf{cone}}}
\newcommand{\mmod}{\operatorname{mod}}
\newcommand{\mdiv}{\operatorname{div}}
\newcommand{\rar}{\rightarrow}
\newcommand{\PPP}{\mathcal{P}}
\renewcommand{\geq}{\geqslant}
\renewcommand{\leq}{\leqslant}
\newtheorem{theorem}{Theorem}
\newtheorem{lemma}[theorem]{Lemma}
\theoremstyle{remark}
\begin{document}

\title{A polyhedral model of partitions with bounded differences and \\ a bijective proof of a theorem of Andrews, Beck, and Robbins}

\author{Felix Breuer}
\address{Felix Breuer\\
Research Institute for Symbolic Computation (RISC)\\
Johannes Kepler University\\
Altenbergerstra{\ss}e 69\\
A-4040 Linz, Austria}
\email{felix@felixbreuer.net}
\urladdr{http://www.felixbreuer.net}

\author{Brandt Kronholm}
\address{Brandt Kronholm\\
Research Institute for Symbolic Computation (RISC)\\
Johannes Kepler University\\
Altenbergerstra{\ss}e 69\\
A-4040 Linz, Austria}
\email{kronholm@risc.jku.at}

\thanks{Both authors were supported by Austrian Science Fund (FWF) special research group \emph{Algorithmic and Enumerative Combinatorics} SFB F50, project number F5006-N15.}

\maketitle

\begin{abstract}
The main result of this paper is a bijective proof showing that the generating function for partitions with bounded differences between largest and smallest part is a rational function.  
This result is similar to the closely related case of partitions with fixed differences between largest and smallest parts which has recently been studied through analytic methods by Andrews, Beck, and Robbins.  
Our approach is geometric: 
We model partitions with bounded differences as lattice points in an infinite union of polyhedral cones.  Surprisingly, this infinite union tiles a single simplicial cone. 
This construction then leads to a bijection that can be interpreted on a purely combinatorial level.  
\end{abstract}


\section{Introduction}

A partition of a non-negative integer $n$ is a weakly non-increasing finite sequence of positive whole numbers $\lambda_1\geq\lambda_2\geq\cdots\geq\lambda_k>0$ such that 
\[ n=\lambda_1+\lambda_2+\cdots+\lambda_k. \]
The integers $\lambda_1,\lambda_2,\cdots,\lambda_k$ are called the parts of the partition.  
We write $|\lambda|=n$ to denote the particular non-negative integer $n$ that $\lambda$ partitions. For $0\leq t\in \ZZ$, we say a partition $\lambda$ has \emph{bounded difference $t$} if the difference between the largest and smallest part of $\lambda$ is at most $t$. We use $\PPP_t$ to denote the set of all (non-empty) partitions with bounded difference $t$ and let $\PPP_t(n):=\PPP_t\cap\mset{\lambda}{|\lambda|=n}$. Furthermore, we let $p(n,t):=\#\PPP_t(n)$ and $P_t(q) := \sum_{n\geq 1} p(n,t)q^n = \sum_{\lambda\in\PPP_t} q^{|\lambda|}$ denote the corresponding counting and generating functions, respectively. 

The natural expression for $P_t(q)$ is the infinite sum of rational functions
\begin{eqnarray}
\label{eqn:inf-sum}
P_t(q) = \sum_{m\geq 1} \frac{ q^m }{(1-q^m)\cdot(1-q^{m+1})\cdot\ldots\cdot(1-q^{m+t})}.
\end{eqnarray}
This can be seen by classifying the partitions $\lambda \in\PPP_t$ by the size $m$ of its smallest part.
For fixed $m$, the partition $\lambda$ has to contain the part $m$ at least once and can contain any of the parts $m+1,\ldots,m+t$ any non-negative number of times. In short
\begin{eqnarray}
\label{eqn:inf-union}
  \PPP_t = \bigcup_{m\geq 1} \mset{(m+t)^{k_t} + \ldots + m^{k_0}}{k_0 \geq 1\text{ and } k_1,\ldots,k_t \geq 0}
\end{eqnarray}
where we use the exponent notation to denote the multiplicity with which a part appears. This yields (\ref{eqn:inf-sum}) immediately.  
We will refer to (\ref{eqn:inf-union}) later in this paper.

Our point of departure for this paper is the surprising fact that the infinite sum of rational functions (\ref{eqn:inf-sum}) simplifies to the rational function (\ref{eqn:rat-fun}) given in Theorem~\ref{thm:bounded-differences} below. Theorem~\ref{thm:bounded-differences} is analogous to and motivated by a recent result of Andrews, Beck, and Robbins \cite{Andrews2014} in which an infinite sum of rational functions very similar to (\ref{eqn:inf-sum}) is reduced to a single rational function by way of $q-$series manipulations. The Andrews, Beck, and Robbins result is discussed in detail in Section~\ref{sec:bounded-vs-fixed}.

\begin{theorem}
\label{thm:bounded-differences}
For all $t\geq 1$,
\begin{eqnarray}
\label{eqn:rat-fun}
P_t(q) = \left(\frac{1}{(1-q)(1-q^2)\cdot\ldots\cdot(1-q^t)} - 1\right) \cdot \frac{1}{1-q^t}.
\end{eqnarray}
\end{theorem}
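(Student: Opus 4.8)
The right-hand side of (\ref{eqn:rat-fun}) essentially dictates the proof. Writing it as
\[
  P_t(q) \;=\; \frac{1}{(1-q)(1-q^2)\cdots(1-q^{t-1})(1-q^t)(1-q^t)} \;-\; \frac{1}{1-q^t},
\]
the first term is the Hilbert series of the monoid $\ZZ_{\geq0}^{\,t+1}$ under the grading assigning degrees $1,2,\dots,t-1,t,t$ to the $t+1$ coordinate generators, and subtracting $\tfrac{1}{1-q^t}$ amounts to deleting the lattice points on the ray of the last generator. Combinatorially, then, (\ref{eqn:rat-fun}) counts the pairs $(\nu,j)$ in which $\nu$ is a \emph{non-empty} partition into parts of size at most $t$ and $j\in\ZZ_{\geq0}$, weighted by $|\nu|+tj$. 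So the goal is a weight-preserving bijection $\Phi$ from $\PPP_t$ onto this set of pairs; everything below aims at constructing it.

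To produce $\Phi$ geometrically I would start from (\ref{eqn:inf-union}): a partition in $\PPP_t$ is encoded by its smallest part $m\geq1$ together with multiplicities $k_0\geq1$, $k_1,\dots,k_t\geq0$, so $\PPP_t$ is the lattice-point set of an infinite family of translated rational cones, one for each value of $m$. The obstruction is that in these naive coordinates $(m,k_0,\dots,k_t)$ the weight $|\lambda|=m(k_0+\dots+k_t)+\sum_{i\geq1}i\,k_i$ is \emph{quadratic}, and the cones sit in $t+2$ dimensions. The crucial — and, I expect, hardest — step is to find a change of variables into $\RR^{t+1}$ after which (i) the weight becomes a linear functional, and (ii) the disjoint union of the reindexed cones $C_m$, together with the lattice points of one ray of degree $t$, is precisely the lattice-point set of a \emph{single} simplicial cone $\sigma\subseteq\RR^{t+1}$ whose primitive ray generators have degrees $1,2,\dots,t-1,t,t$. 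Granting this, the Hilbert series of $\sigma$ is the first term displayed above, so removing the exceptional ray recovers $P_t(q)$ in the form (\ref{eqn:rat-fun}); moreover the identification of lattice points \emph{is} the bijection $\Phi$, and transporting it back through the coordinate change yields an explicit combinatorial recipe on partitions. The real content is verifying that these pieces genuinely tile $\sigma$ (pairwise disjointness and coverage) — this is where the surprising collapse of the infinite union into one cone must be established.

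For a short self-contained proof one can instead telescope (\ref{eqn:inf-sum}) directly. Put $a_m=\bigl((1-q^m)(1-q^{m+1})\cdots(1-q^{m+t-1})\bigr)^{-1}$. Clearing denominators verifies, for every $m\geq1$,
\[
  \frac{q^m}{(1-q^m)(1-q^{m+1})\cdots(1-q^{m+t})} \;=\; \frac{1}{1-q^t}\,(a_m-a_{m+1}),
\]
since $a_m-a_{m+1}$, over the common denominator $(1-q^m)(1-q^{m+1})\cdots(1-q^{m+t})$, has numerator $(1-q^{m+t})-(1-q^m)=q^m(1-q^t)$. Summing over $m\geq1$, the right-hand side telescopes in $\ZZ[[q]]$: the partial sum through $m=M$ equals $\tfrac{1}{1-q^t}\bigl(a_1-a_{M+1}\bigr)$, and since $a_{M+1}\to1$ coefficientwise as $M\to\infty$ while $a_1=\bigl((1-q)(1-q^2)\cdots(1-q^t)\bigr)^{-1}$, the limit is exactly the right-hand side of (\ref{eqn:rat-fun}). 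Here the only obstacle is spotting the telescoping, for which the lone factor $\tfrac{1}{1-q^t}$ in (\ref{eqn:rat-fun}) is a broad hint; the geometric route is the more demanding one, and the only one that produces the promised bijection.
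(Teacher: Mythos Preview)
Your telescoping argument is correct and complete: the identity $a_m-a_{m+1}=\dfrac{q^m(1-q^t)}{(1-q^m)\cdots(1-q^{m+t})}$ is exactly right, and the coefficientwise limit $a_{M+1}\to 1$ is legitimate in $\ZZ[[q]]$, so the partial sums converge to the stated rational function. This is a genuinely different route from the paper's proof. The paper does not manipulate the sum (\ref{eqn:inf-sum}) at all; instead it carries out precisely the geometric program you sketch in your first two paragraphs: it builds explicit half-open simplicial cones $C_m\subset\RR^{t+1}$ with generators of weights $m,m+1,\ldots,m+t$, proves in Theorem~\ref{thm:main-theorem} that their disjoint union is a single closed simplicial cone minus one extreme ray, and then reads off (\ref{eqn:rat-fun}) as the lattice-point generating function of that cone. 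Your telescoping proof is shorter and entirely elementary, but it does not produce the bijection; the paper's longer polyhedral argument is what actually yields the explicit map $\Phi$ you describe as the goal, and indeed the paper's inductive gluing of $C_1,\ldots,C_k$ along shared facets (in the proof of Theorem~\ref{thm:main-theorem}) is the geometric counterpart of your telescoping cancellation $a_m-a_{m+1}$ --- a connection the paper itself flags as an open question in its conclusion.
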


Our goal in this paper is to achieve a combinatorial and geometric understanding of this formula. In particular, a striking feature of all the rational functions appearing in (\ref{eqn:inf-sum}) and (\ref{eqn:rat-fun}) is that they have a form typically obtained from polyhedral cones \cite{BeckRobins}. This begs three questions:
\begin{enumerate}[label=\roman*)]
\item Is there a polyhedral model of $\PPP_t$ that makes the fact that $P_t(q)$ is rational readily apparent?
\item Is there a geometric reason why the infinite sum of rational functions (\ref{eqn:inf-sum}) simplifies to a single rational function (\ref{eqn:rat-fun})? \label{GEAquestion}
\item Is there a bijective proof of Theorem~\ref{thm:bounded-differences}?
\end{enumerate}

Our contribution in this paper is that we provide affirmative answers to each of these questions and explain the constructions involved from geometric and combinatorial points of view; thereby providing an answer for question (\ref{GEAquestion} put to the authors by George Andrews.  In particular, we develop a polyhedral model of partitions with bounded differences that allows us to interpret the identity of (\ref{eqn:inf-sum}) and (\ref{eqn:rat-fun}) in terms of a tiling of a polyhedral cone in Theorem~\ref{thm:main-theorem}. This geometric result immediately implies Theorem~\ref{thm:bounded-differences} using different methods than the $q$-series manipulations employed in \cite{Andrews2014}. More importantly, the geometric approach then leads us to a bijective proof of Theorem~\ref{thm:bijection}, which is a combinatorial restatement of Theorem~\ref{thm:bounded-differences}. Even though our motivation for Theorem~\ref{thm:bijection} is geometric, our bijective proof is entirely combinatorial. 

In this paper we draw freely on notions from both partition theory and polyhedral geometry. For references on these subjects we refer the reader to the textbooks \cite{andrews1998theory,berndt2006number,BeckRobins,DeLoera2012,Ziegler}.

\section{$P_0(q)$ is not a Rational Function}
\label{sec:t=0}

For $t=0$ we have $P_0(q)= \sum_{m\geq 1} \frac{q^m}{1-q^m}$. In contrast to the cases where $t$ is one or greater, $P_0(q)$ is \emph{not} a rational function. In fact, $p(n,0) = d(n)$ for $n\geq 1$ where $d(n)$ counts the divisors of $n$. As a warm-up for the constructions below we will now show this fact via a simple polyhedral model.

We take our cue from (\ref{eqn:inf-union}) and write $\PPP_0$ as an infinite union of (open) rays in the plane:

\[
  X_0 := \bigcup_{m \geq 1} \mset{ \mu\mmat{1\\m-1} }{0 < \mu\in\RR}.
  \]

Note that the rays $\mset{ \mu\mmat{1\\m-1} }{0 < \mu\in\RR}$ can be viewed as half-open cones $C_m$, a perspective we will make use of below. Given the above definition, an integer point $x\in \ZZ^2 \cap X_0$ is of the form $x=\mmat{k\\k(m-1)}$. A bijection $\phi: \ZZ^2 \cap X_0 \rar \PPP_0$ can be defined by mapping $x=\mmat{k\\k(m-1)}$ to the partition $\lambda=m^k$. Let $H_n = \mset{x\in\RR^2}{ \sum_i x_i = n}$ denote the hyperplane of all points with coordinate sum $n$, or, for short, at \emph{height} $n$. Then $p(n,0) = \#\ZZ^2\cap H_n \cap X_0$.

\begin{figure}[t]
\begin{center}
\includegraphics[angle=0,width=6cm]{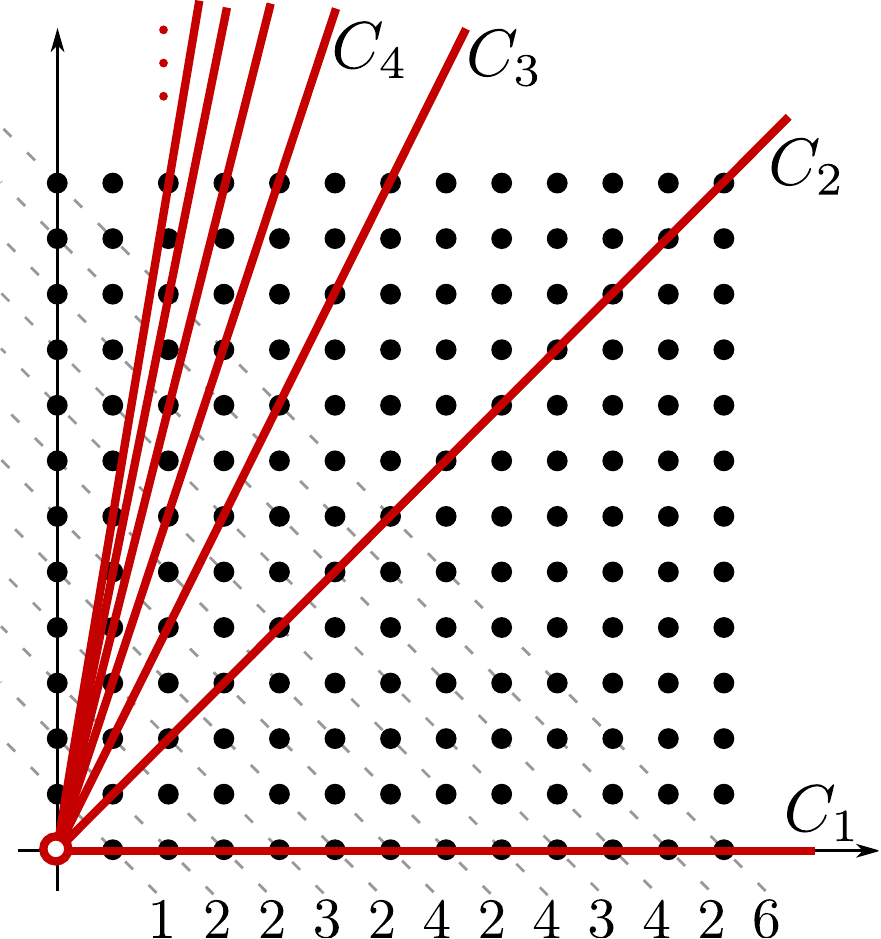}
\end{center}
 \caption[]{\label{fig:case-0} The polyhedral model for case $t=0$ is the union of relatively open rays (not including the origin) passing through the lattice points $\mmat{1\\i}$ for $i\in\ZZ_{\geq0}$. The number of lattice points contained in the union of these rays at different heights is given at the bottom of the figure.}
\end{figure}

This geometric model $X_0$ is illustrated in Figure~\ref{fig:case-0}. It corresponds to lifting $P_0(q)$ to the multivariate generating function $P_0(x_1,x_2)$ defined by 
\begin{eqnarray*}
  P_0(x_1,x_2)&=& \sum_{m\geq 1} \frac{x_1^{1}x_2^{m-1}}{1-x_1^1x_2^{m-1}}.
\end{eqnarray*}
In the following we will use multi index notation and write $x^v:=x_1^{v_1}\cdot\ldots\cdot x_d^{v_d}$ for a vector $v\in\ZZ^d$ so that the above equation reads
\begin{eqnarray*}
  P_0(x)&=& \sum_{m\geq 1} \frac{x^{\mmat{1\\m-1}}}{1-x^{\mmat{1\\m-1}}}.
\end{eqnarray*}
Note that $P_0(x)$ does indeed specialize to $P_0(q)$ by substituting $x_1=x_2=q$. To see that $p(n,0) = d(n)$ we observe that the ray $\mset{ \mu\mmat{1\\m-1} }{0 < \mu\in\RR}$ contains a lattice point at height $n$ if and only if there exists an integer $k$ such that $n=mk$, i.e., if and only if $m|n$. Since we sum over all $m\geq 1$ it follows that $p(n,0) = d(n)$.

\section{$P_1(q)$ is a Rational Function}
\label{sec:t=1}

In contrast to $P_0(q)$, the generating functions $P_t(q)$ are rational functions for all $t\geq1$. To build intuition before tackling the general case, we first show this for $t=1$ via a geometric argument  illustrated in Figure~\ref{fig:case-1}. Our strategy is this: Just as in the case $t=0$, we identify each set in the infinite union (\ref{eqn:inf-union}) as the set of lattice points in a half-open polyhedral cone $C_m$. However, in contrast to the case $t=0$, these cones $C_m$ are now 2-dimensional and they tile the half-open quadrant $\RR_{>0}\times\RR_{\geq 0}$, which is itself a single half-open simplicial cone. This immediately allows us to read off the desired rational function expression (\ref{eqn:rat-fun}) for $P_1(q)$. 

We now introduce some notation to make this argument precise. Given $o\in\{0,1\}^d$ and a matrix $V\in\ZZ^{d\times d}$ with linearly independent columns $v_1,\ldots,v_d$, we use
\[
  \cone^o(V) := \mset{\sum_{i=1}^d \mu_i v_i}{0 \leq \mu_i\in\RR \text{ and if $o_i=1$ then }0 < \mu_i}
\]
to denote the half-open simplicial cone generated by the columns of $V$ and where the facet opposite $v_i$ is open if and only if $o_i = 1$. With this notation we define
\begin{eqnarray}
\label{eqn:X_1}
   X_1 := \bigcup_{m \geq 1} C_m, \;\;\;\; \text{ and } \;\;\;\; C_m:=\cone^{(1,0)}(\mmat{1 & 1 \\ m-1 & m}) 
\end{eqnarray}
The generating function of the lattice points in $C_m$ satisfies
\[
\sum_{z\in \ZZ^2 \cap C_m} x^z = \frac{x^{\mmat{1\\m-1}}}{(1-x^{\mmat{1\\m-1}})(1-x^{\mmat{1\\m}})}.
\]
Substituting $x_1=x_2=q$ we obtain precisely the $m$-th summand in $P_1(q)$. This shows that (\ref{eqn:X_1}) is a polyhedral model of the expressions (\ref{eqn:inf-sum}) and (\ref{eqn:inf-union}). It follows that $p(n,1) = \#\ZZ^2\cap H_n \cap X_1$.

\begin{figure}[t]
\begin{center}
\includegraphics[angle=0,width=6cm]{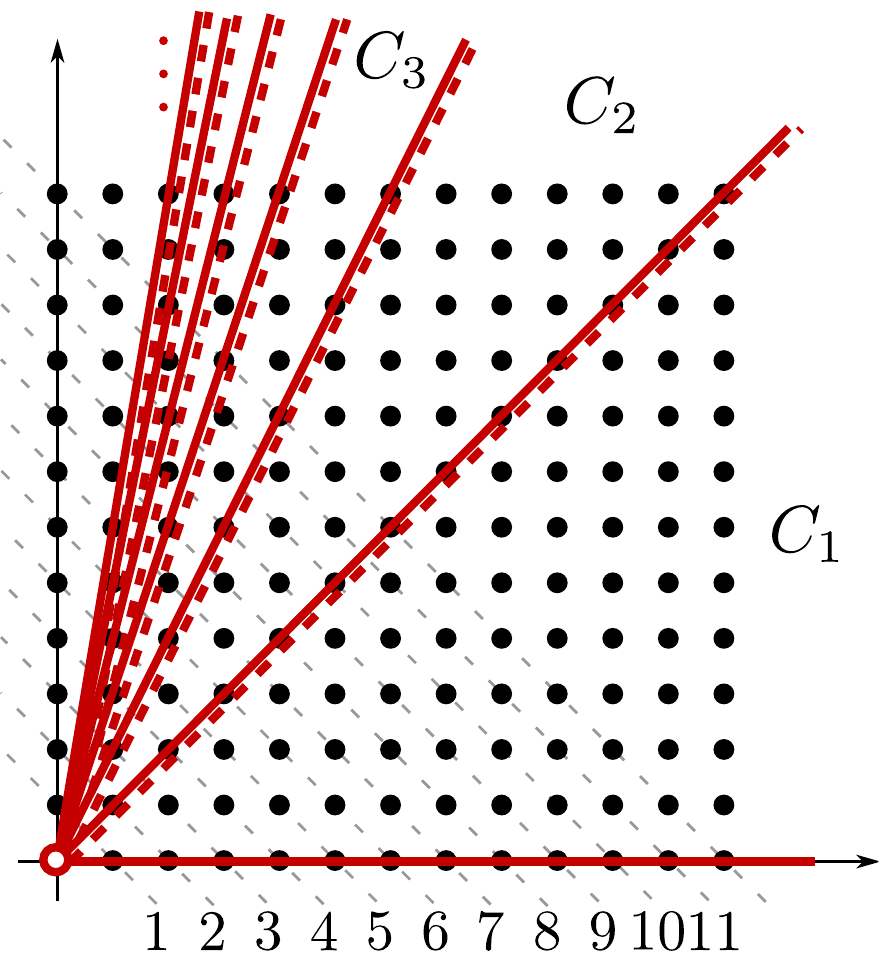}
\end{center}
 \caption[]{\label{fig:case-1} The polyhedral model for the case $t=1$ is the entire non-negative quadrant without the vertical axis. It is given as a union over all $i\in\ZZ_{\geq0}$ of the half-open cones with generators $\mmat{1\\i}$ and $\mmat{1\\i+1}$ whose top edge is open. This construction immediately shows that the lattice point count at different heights is a linear function.}
\end{figure}

As we can see in Figure~\ref{fig:case-1}, the cones $C_m$ tile the positive quadrant, excluding the vertical axis, i.e.,
\begin{eqnarray}
\label{eqn:X_1single}
  X_1 = \RR_{>0}\times\RR_{\geq 0} = \cone^{(1,0)}(\mmat{1 & 0 \\ 0 & 1}).
\end{eqnarray}
We have thus observed that the infinite union of cones (\ref{eqn:X_1}) is in fact a single simplicial cone (\ref{eqn:X_1single}). This allows us to read off the generating function immediately, namely
\[
  \sum_{v\in \ZZ^2 \cap \cone^{(1,0)}(\mmat{1 & 0 \\ 0 & 1})} x^v  = \frac{x^{\mmat{1\\0}}}{(1-x^{\mmat{1\\0}})(1-x^{\mmat{0\\1}})}
\]
from which, by substituting $x_1=x_2=q$, we obtain
\[
  P_1(q) = \frac{q}{(1-q)^2}
\]
which implies in particular
\[
  p(n,1) = \#\ZZ^2\cap H_n \cap \cone^{(1,0)}(\mmat{1 & 0 \\ 0 & 1}) = n.
\]
In this way, (\ref{eqn:X_1single}) can be viewed as a polyhedral model of (\ref{eqn:rat-fun}). We have thus shown Theorem~\ref{thm:bounded-differences} in the case $t=1$ via the geometric tiling argument shown in Figure~\ref{fig:case-1}. It turns out that this works for all $t\geq 1$, as we will see in Section~\ref{sec:polyhedral-model}. Interestingly, the bijection between $\ZZ^2\cap X_1$ and $\PPP_1$ implicit in this construction is non-trivial, as we discuss in Section~\ref{sec:bijection}.

\section{Polyhedral model in the general case $t\geq 1$}
\label{sec:polyhedral-model}

To handle the general case $t\geq 1$ we will need to add an additional twist to the construction, which we will illustrate by using $t=2$ as a running example.  
As before, the $m$-th summand in (\ref{eqn:inf-sum}) will correspond to a half-open $(t+1)$-dimensional simplicial cone $C_m$ in $\RR^{t+1}$.  
The cones $C_m$ are pairwise disjoint.   
Their union tiles a single simplicial cone $X_t\subset\RR^{t+1}$, which has one extreme ray removed.

\begin{figure}[t]
\begin{center}
\includegraphics[angle=0,width=6cm]{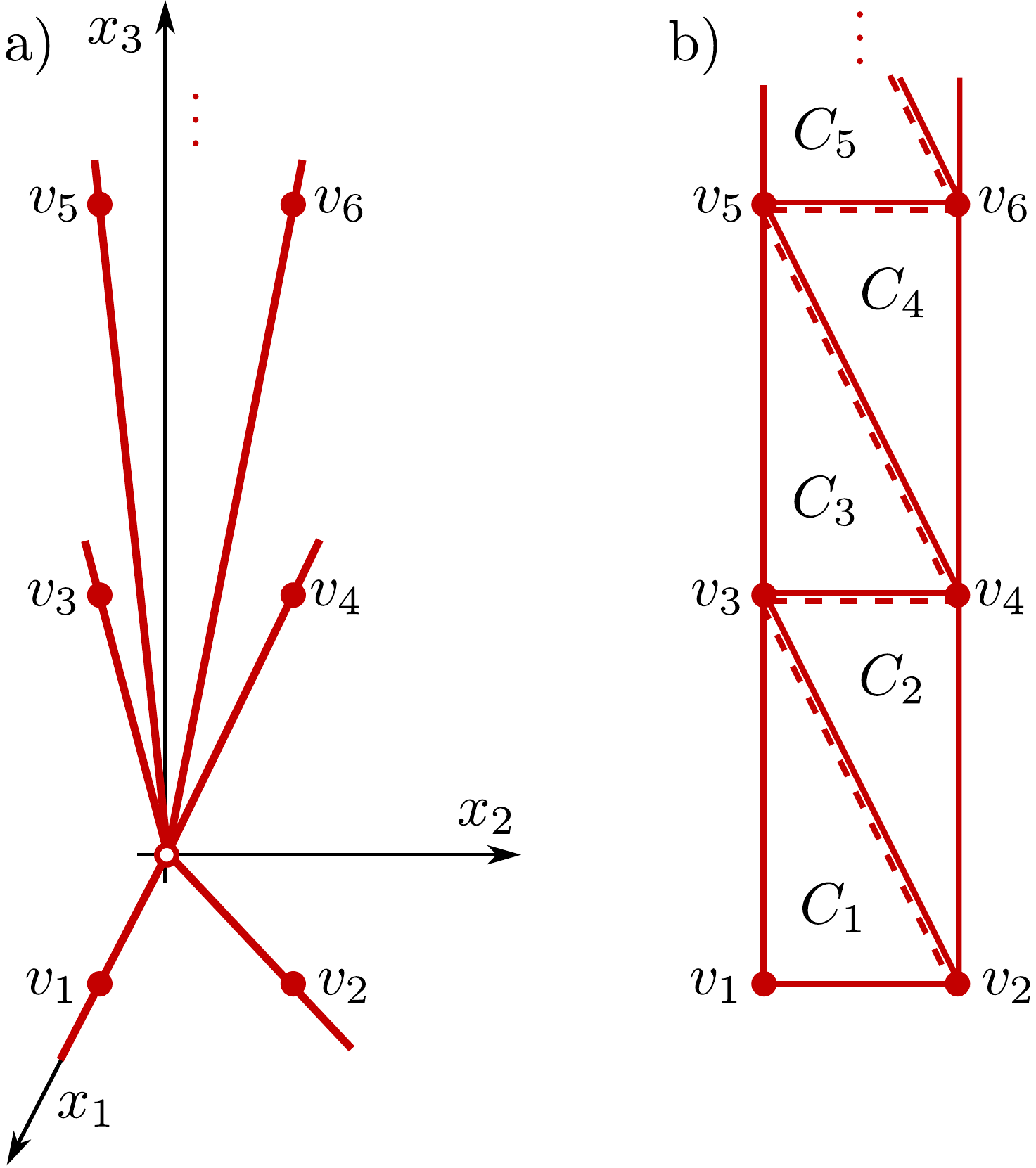}
\end{center}
 \caption[]{\label{fig:case-2} The polyhedral model for the case $t=2$. The generators $v_i$ and the extreme rays through the $v_i$ are shown in a). The cones $C_i$ are generated by vectors $v_i,v_{i+1},v_{i+2}$. The triangles in b) indicate which faces of the $C_i$ are open: each triangle is the intersection of the corresponding cone with the hyperplane given by $x_1=1$. 
The union of all $C_i$ is the closed cone generated by $v_1=e_1$, $v_2$ and $e_3$, with the vertical axis removed.  The $e_i$ are the standard unit vectors. }
\end{figure}

We begin this construction by defining vectors $b_0,\ldots,b_{t-1}\in\RR^{t}$ as follows: $b_j$ is the vector that contains $j+1$ leading ones and after that only zeros. We then define an infinite sequence of vectors $v_1,v_2,\ldots \in \ZZ^{t+1}$ by 
\[
  v_i := \mmat{b_{i-1 \mmod t} \\ ( i-1 \mdiv t )t}.
\] 
In the case $t=2$, this gives
\begin{eqnarray}\label{bvectors}
  v_1 = \mmat{1\\0\\0}, v_2 = \mmat{1\\1\\0}, v_3= \mmat{1\\0\\2}, v_4=\mmat{1\\1\\2}, v_5=\mmat{1\\0\\4}, \ldots, v_{2k+1}= \mmat{1\\0\\2k}, v_{2k+2} = \mmat{1\\1\\2k}, \ldots,
\end{eqnarray}
as shown in Figure~\ref{fig:case-2}. For each $i$ the sum of coordinates of $v_i$ is $|v_i|=i$. Let $V_m$ denote the matrix consisting of columns $v_m,v_{m+1},\ldots,v_{m+t}$ and define cones $C_m := \cone^{(1,0,\ldots,0)} V_m$, that is, the columns of $V_m$ are the generators of the cone $C_m$ and the facet opposite to the first generator is open. For all $m\geq1$, the columns of $V_m$ generate the same lattice $\Lambda$, which consists of all integer points where the last coordinate is divisible by $t$, i.e., $\Lambda := \ZZ^t \times t\ZZ = V_m\ZZ^{t+1}$ for all $m$. Let 
\begin{align}
  X_t &:= \bigcup_{m\geq 1} C_m = \bigcup_{m\geq 1} \cone^{(1,0,\ldots,0)} V_m = \bigcup_{m\geq 1} \mset{ \sum_{i=0}^t \alpha_i v_{m+i}}{ \alpha_i \geq 0, \alpha_0 > 0}
\end{align}
denote the (disjoint) union of these cones.
By construction
\[
  \sum_{z \in \Lambda \cap X_t} x^z = \sum_{m\geq 1} \left(\sum_{z \in \Lambda \cap C_m} x^z \right) =\sum_{m\geq 1} \frac{x^{v_m}}{(1-x^{v_m})\cdot\ldots\cdot(1-x^{v_{m+t}})}.
\]
Specializing $x_i=q$ we obtain precisely the $m$-th summand of $P_t(q)$. Therefore 
\begin{align}
\label{eqn:link-bounded-differences-and-geometry}
p(n,t)&=\#\Lambda \cap X_t \cap \mset{x\in\RR^{t+1}}{\sum x_i = n},
\end{align}
which means in particular that if we specialize $\sum_{z \in \Lambda \cap X_t} x^z$ at $x_0=x_1=\ldots=x_t=q$ we obtain $P_t(q)$.

For the case $t=2$, this construction is illustrated in Figure~\ref{fig:case-2}.  
As we can see, the $C_m$
tile the simplicial cone with generators $\mmat{1\\0\\0}$, $\mmat{1\\1\\0}$ and $\mmat{0\\0\\1}$, excluding the ray $\cone^0(\mmat{0\\0\\1})$, i.e.,  
\begin{eqnarray}
\label{eqn:X_2}
  X_2 = \cone^{0}(\mmat{1&1&0 \\ 0&1&0 \\ 0&0&1 }) \setminus \cone^0(\mmat{0\\0\\1}).
\end{eqnarray}

To obtain such a description of $X_t$ for all $t\geq1$, we provide an inequality description of the cones $C_m$ in the following Lemma.  

\begin{lemma} 
\label{lem:cone-description}
For all $m\geq 1$, $j\in\{0,...,t-1\}$, and $k\in\ZZ$
\begin{eqnarray}
\label{eqn:lemma X_t}
  C_m = \mset{x\in\RR^{t+1}}{x_0\geq \ldots \geq x_{t-1}\geq 0, \sprod{u_{m-1},x} \geq 0, \sprod{u_{m},x} < 0 }
\end{eqnarray}
where $e_0,...e_t$ are the standard basis vectors for $\RR^{t+1}$ and we define $u_{j,k}:=-kte_0+te_j+e_t$ and  $u_m :=u_{(m~\mmod~t),(i~\mdiv~t)+1}$.  For all $m$, the inequality $x_m \geq x_{m+1}$ found in (\ref{eqn:lemma X_t}) is redundant and can be omitted without changing the set $C_m$.  
The $(t+1)$-dimensional cone $C_m$ is therefore given by $(t+1)$ inequalities. Alternatively, the cones can also be defined by the infinite system of inequalities
\begin{eqnarray}
\label{eqn:lemma X_t infinite}
  C_i = \mset{x\in\RR^{t+1}}{x_0\geq \ldots \geq x_{t-1}\geq 0, \bigwedge_{l=0}^{i-1} \sprod{u_{l},x} \geq 0, \bigwedge_{l\geq i} \sprod{u_{l},x} < 0 }.
\end{eqnarray}
\end{lemma}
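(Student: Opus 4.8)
The plan is to bypass explicit matrix inversion and instead verify, one linear form at a time, that the functionals appearing in (\ref{eqn:lemma X_t}) are, up to positive scaling, the rows of $V_m^{-1}$. Recall the standard fact that for an invertible matrix $W\in\RR^{(t+1)\times(t+1)}$ with columns $w_0,\dots,w_t$ and any $o\in\{0,1\}^{t+1}$, $\cone^o(W)=\mset{x\in\RR^{t+1}}{(W^{-1}x)_i\geq 0 \text{ for all }i, \text{ strictly whenever }o_i=1}$; equivalently, a linear form $w$ is a positive multiple of the $i$-th row of $W^{-1}$ exactly when $\sprod{w,w_j}=0$ for all $j\neq i$ and $\sprod{w,w_i}>0$. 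Since $V_m$ has columns $v_m,\dots,v_{m+t}$ and the facet opposite the first generator $v_m$ is the open one, once the rows of $V_m^{-1}$ have been matched with the forms in (\ref{eqn:lemma X_t}) the equality (\ref{eqn:lemma X_t}) follows at once, and the redundancy assertion reduces to exhibiting one of the monotonicity forms as a non-negative combination of the remaining facet forms.

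The first step is to record the shape of the generators. Writing $m-1=kt+s$ with $0\leq s\leq t-1$ and $k\geq 0$, one reads off from the definition of the $v_i$ that $v_{m+j}=e_0+e_1+\dots+e_{s+j}+kt\,e_t$ for $0\leq j\leq t-1-s$, while $v_{m+j}=e_0+e_1+\dots+e_{s+j-t}+(k+1)t\,e_t$ for $t-s\leq j\leq t$. In particular the first and last generators $v_m$ and $v_{m+t}$ carry the same leading block $b_s$ and satisfy $v_{m+t}=v_m+t\,e_t$; this coincidence is exactly what renders one inequality superfluous.

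The second step is a short list of inner-product computations, carried out separately on the two ranges $0\leq j\leq t-1-s$ and $t-s\leq j\leq t$ (and with a separate look at the edge case $s=t-1$). One checks: (a) $\sprod{u_{m-1},v_{m+j}}=0$ for $0\leq j\leq t-1$ and $\sprod{u_{m-1},v_{m+t}}=t$, so $\sprod{u_{m-1},x}\geq 0$ expresses ``the coefficient of $v_{m+t}$ in the conic representation of $x$ is $\geq 0$''; (b) $\sprod{u_m,v_{m+j}}=0$ for $1\leq j\leq t$ and $\sprod{u_m,v_m}=-t$, so $\sprod{u_m,x}<0$ is the open inequality ``the coefficient of $v_m$ is $>0$''; (c) for each $p\in\{0,\dots,t-1\}$ the form $e_p-e_{p+1}$ (read as $e_{t-1}$ when $p=t-1$) pairs with $v_{m+j}$ as the indicator that the leading block of $v_{m+j}$ ends precisely at coordinate $p$, and since these leading blocks run through $b_0,\dots,b_{t-1}$ with $b_s$ occurring twice, the form with $p\neq s$ is the facet normal opposite a unique generator $v_{m+j}$, $1\leq j\leq t-1$, whereas the form with $p=s$ equals $\tfrac{1}{t}\bigl(u_{m-1}-u_m\bigr)$ and is hence strictly positive on all of $C_m$. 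Case (c) with $p=s$ is the redundancy statement: the monotonicity inequality indexed by $s=(m-1)\bmod t$ --- that is, $x_s\geq x_{s+1}$, or $x_{t-1}\geq 0$ when $s=t-1$ --- is implied, in fact strictly, by the two $\sprod{u_\bullet,x}$ inequalities. Assembling (a)--(c) accounts for all $t+1$ rows of $V_m^{-1}$ and establishes (\ref{eqn:lemma X_t}). None of this is conceptually difficult; the only genuine obstacle is keeping the modular ``wraparound'' in the definitions of the $v_i$ and of $u_m$ straight through the case distinctions.

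Finally, (\ref{eqn:lemma X_t infinite}) follows from (\ref{eqn:lemma X_t}) by a monotonicity argument. On the base region $\mset{x\in\RR^{t+1}}{x_0\geq\dots\geq x_{t-1}\geq 0}$ the sequence $l\mapsto\sprod{u_l,x}$ is non-increasing for $l\geq 0$: one computes $u_{l+1}-u_l=t\bigl(e_{(l\bmod t)+1}-e_{l\bmod t}\bigr)$ when $l\bmod t<t-1$ and $u_{l+1}-u_l=-t\,e_{t-1}$ when $l\bmod t=t-1$, each of which pairs non-positively with every $x$ in the base region (and $u_0=e_t$, so the $l=0$ term is $x_t$). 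Hence for $x\in C_i$ the relations $\sprod{u_{i-1},x}\geq 0$ and $\sprod{u_i,x}<0$ provided by (\ref{eqn:lemma X_t}) propagate along the sequence, yielding $\sprod{u_l,x}\geq 0$ for every $l\leq i-1$ and $\sprod{u_l,x}<0$ for every $l\geq i$; thus $C_i$ is contained in the set described by (\ref{eqn:lemma X_t infinite}). The reverse inclusion is immediate, since (\ref{eqn:lemma X_t infinite}) contains the defining inequalities of (\ref{eqn:lemma X_t}) among its $l=i-1$ and $l=i$ constraints. Therefore the two descriptions coincide.
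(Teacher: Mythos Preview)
Your proof is correct and follows essentially the same route as the paper: both verify, by direct inner-product computation against the generators $v_m,\dots,v_{m+t}$, that the listed linear forms are the facet normals of $C_m$, and both derive the infinite description (\ref{eqn:lemma X_t infinite}) from the monotonicity of $l\mapsto\sprod{u_l,\,\cdot\,}$. Your explicit identity $e_s-e_{s+1}=\tfrac{1}{t}(u_{m-1}-u_m)$ (with $s=(m-1)\bmod t$) makes the redundancy claim a bit more transparent than the paper's counting argument, and your monotonicity is phrased on the whole base region rather than just on generators, but these are differences of presentation, not of method.
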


\begin{proof}
For $j\in\{0,\ldots,t-1\}$ and $k\in\ZZ$ we define the matrix 
\[
  M_{j,k} = \mmat{ 
    b_j     & \cdots & b_{t-1} & b_0 & \cdots & b_{j} \\
    kt  & \cdots & kt  & (k+1)t  & \cdots & (k+1)t 
  }.
\]
Every matrix $V_m$ is of the form $M_{j,k}$ for suitable $j$ and $k$. In fact, recalling (\ref{bvectors}) we have
\[
V_m = M_{(m-1~\mmod~t), (m-1~\mdiv~t)}.
\]
Each $V_m$ contains exactly $t+1$ linearly independent columns. 
To see that a given system $S$ of homogeneous linear inequalities is an inequality description of $V_m$, we must satisfy the following two conditions:
\begin{enumerate}[label=\roman*)]
\item The columns of $V_m$ must satisfy the inequalities in $S$\label{condition1}.
\item For every subset $F$ of $t$ columns from $V_m$ there is an inequality that is satisfied at equality by all $v\in F$. Such an inequality is called \emph{facet-defining}. \label{condition2}
\end{enumerate}
To see that the inequality system given in the statement of the lemma satisfies these conditions, we proceed as follows.

First, all columns appearing in any of the matrices satisfy the inequality system $x_0\geq \ldots \geq x_{t-1}\geq 0$.
Moreover, any vector of the form $\mmat{b_l \\ c}$, for any value $c$, satisfies $x_i = x_{i+1}$ for every $i\not= l$.  
This means that the inequality $x_i \geq x_{i+1}$ is facet-defining for $M_{j,k}$ for all $i\not=j$.  
In other words, for every $t$-subset $F$ of the columns of $M_{j,k}$ that contains both $\mmat{b_j\\kt}$ and $\mmat{b_j \\(k+1)t}$, we have found a facet-defining inequality. 

Next we show that the remaining two inequalities are facet-defining for the two $t$-subsets $F$ where exactly one of these two columns is omitted. To this end we compute, for $k,r\in\ZZ$, $j,l\in\{0,\ldots,t-1\}$,
\begin{eqnarray*}
\label{eqn:ujk}
  \sprod{u_{j,k},\mmat{b_l \\ rt}} &=& 
    \choice{  
      (r-k)t  & \text{ if $l<j$, }      \\ 
      (r-k+1)t & \text{ if $l\geq j$, }    
    }
\end{eqnarray*}
so that in particular
\begin{eqnarray*}
  \sprod{u_{j,k+1},\mmat{b_l \\ kt}} &=& 
    \choice{  
      -t  & \text{ if $l<j$, }      \\
      0   & \text{ if $l\geq j$, }    
    } \\
  \sprod{u_{j,k+1},\mmat{b_l \\ (k+1)t}} &=& 
    \choice{  
      0  & \text{ if $l<j$, }      \\ 
      t   & \text{ if $l\geq j$, }    
    }
\end{eqnarray*}
which implies, for $j\in\{0,\ldots,t-1\}$,
\begin{eqnarray}
\label{eqn:but-last}
  u_{j,(k+1)}^\top M_{j,k} &=& \mmat{ 
    0 & \cdots & 0 & t \\
  }
\end{eqnarray}
as well as, for $j\in\{0,\ldots,t-2\}$,
\begin{eqnarray}
\label{eqn:but-first-1}
  u_{j+1,(k+1)}^\top M_{j,k} &=& \mmat{ 
    -t & 0 & \cdots & 0  \\
  } \\
\label{eqn:but-first-2}
  u_{0,(k+2)}^\top M_{j,k} &=& \mmat{ 
    -t & 0 & \cdots & 0  \\
  }.
\end{eqnarray}
(\ref{eqn:but-last}) shows that, for all $m\geq 1$, the inequality $\sprod{u_{m-1 \mmod t, (m-1 \mdiv t) + 1},x} \geq 0$ defines the facet of the cone generated by $V_m = M_{m-1 \mmod t,m-1 \mdiv t}$ corresponding to the set $F$ consisting of all columns but the last.  
Similarly, (\ref{eqn:but-first-1}) and (\ref{eqn:but-first-2}) show that $\sprod{u_{m \mmod t, (m \mdiv t) + 1},x} < 0$ defines the (open) facet of the cone generated by $V_m = M_{m-1 \mmod t,m-1 \mdiv t}$ corresponding to the set $F$ consisting of all columns but the first.  
Thus conditions \ref{condition1} and \ref{condition2} are satisfied for the system of inequalities (\ref{eqn:lemma X_t}).

Finally, we see from (\ref{eqn:ujk}) that $\sprod{u_m,\mmat{b_l\\ rt}} \leq \sprod{u_{m'},\mmat{b_l\\ rt}}$ when $m \geq m'$. Therefore, $C_m$ satisfies all of the inequalities $\sprod{u_l,x}\geq 0$ for $l\leq m-1$ and all of the inequalities $\sprod{u_l,x} < 0$ for $l\geq m$. This shows that the additional inequalities given in the infinite system (\ref{eqn:lemma X_t infinite}) are redundant and therefore (\ref{eqn:lemma X_t infinite}) is correct as well.
\end{proof}

From this inequality description of the cones $C_m$, we can now derive a simple description of their union $X_t$.

\begin{theorem}
\label{thm:main-theorem}
For all $t\geq1$,
\begin{eqnarray*}
X_t &=& \mset{x\in\RR^{t+1}}{x_0 \geq \ldots \geq x_{t-1} \geq 0, x_t \geq 0} \setminus \mset{x\in\RR^{t+1}}{x_0 = \ldots = x_{t-1} = 0, x_t\geq 0} \\
    &=& \cone^0(\mmat{ 
            1 & 1 & \cdots    & 1      & 0 \\
              & 1 &           & 1      & 0 \\
              &   & \ddots & \vdots & \vdots \\
              &   &           & 1      & 0 \\
              &   &           &        & 1
        }) 
        \setminus \cone^0(\mmat{0 \\ \vdots \\ 0 \\\ 1}).
\end{eqnarray*}
\end{theorem}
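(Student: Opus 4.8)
The plan is to read off the closed-form description of $X_t$ directly from the infinite inequality description (\ref{eqn:lemma X_t infinite}) of the cones $C_m$ provided by Lemma~\ref{lem:cone-description}, proving both set equalities by mutual inclusion. I would first dispose of the equivalence of the two forms of the right-hand side (the ``inequality'' form and the $\cone^0$ form) as a routine triangular change of variables: writing a generic element of the big cone as $\sum_{j=0}^{t-1}c_j\mmat{b_j\\0}+c_t e_t$ with all $c_i\geq 0$ gives coordinates $x_i=\sum_{j\geq i}c_j$ for $i<t$ and $x_t=c_t$, which (with $c_i=x_i-x_{i+1}$ and $c_{t-1}=x_{t-1}$) is equivalent to the chain $x_0\geq\cdots\geq x_{t-1}\geq 0$, $x_t\geq 0$; and $\cone^0(e_t)$ is literally $\mset{x}{x_0=\cdots=x_{t-1}=0,\,x_t\geq 0}$. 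Write $Y$ for the resulting right-hand side.

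For the inclusion $X_t\subseteq Y$, I would fix $m\geq 1$ and use (\ref{eqn:lemma X_t infinite}): every $x\in C_m$ satisfies $x_0\geq\cdots\geq x_{t-1}\geq 0$ by inspection, and since $u_0=u_{0,1}=e_t$ and $0\leq m-1$, the constraint $\sprod{u_0,x}\geq 0$ appearing in (\ref{eqn:lemma X_t infinite}) is exactly $x_t\geq 0$, so $C_m$ lies inside the closed simplicial cone. Moreover $C_m$ misses the removed ray, since a point $\mu e_t$ with $\mu\geq 0$ has $\sprod{u_l,\mu e_t}=\mu\geq 0$ for every $l$ (the last coordinate of each $u_{j,k}$ is $1$), contradicting the strict constraint $\sprod{u_m,x}<0$. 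Hence $C_m\subseteq Y$, and therefore $X_t=\bigcup_{m\geq 1}C_m\subseteq Y$.

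For the reverse inclusion $Y\subseteq X_t$, given $x\in Y$ I would decompose $x=y+x_t e_t$ with $y=(x_0,\ldots,x_{t-1},0)=\sum_{j=0}^{t-1}c_j\mmat{b_j\\0}$, $c_j\geq 0$, $\sum_j c_j=x_0$, noting $x_0>0$ because $x$ is not on the removed ray. The key is then to study the scalar sequence $l\mapsto\sprod{u_l,x}=\sum_j c_j\sprod{u_l,\mmat{b_j\\0}}+x_t$. Using the explicit evaluation of $\sprod{u_{j,k},\mmat{b_l\\rt}}$ carried out in the proof of Lemma~\ref{lem:cone-description}, each $\sprod{u_l,\mmat{b_j\\0}}$ is non-increasing in $l$ and is at most $-(l\mdiv t)\,t$; hence $\sprod{u_l,x}$ is non-increasing in $l$, is bounded above by $-(l\mdiv t)\,t\,x_0+x_t$, and therefore tends to $-\infty$, while it equals $x_t\geq 0$ at $l=0$. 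Thus $m:=\min\mset{l\geq 1}{\sprod{u_l,x}<0}$ is well defined, monotonicity forces $\sprod{u_l,x}\geq 0$ for $0\leq l<m$ and $\sprod{u_l,x}<0$ for $l\geq m$, and since $x$ also satisfies $x_0\geq\cdots\geq x_{t-1}\geq 0$, description (\ref{eqn:lemma X_t infinite}) yields $x\in C_m\subseteq X_t$.

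I do not expect a genuine obstacle, as Lemma~\ref{lem:cone-description} has already carried out the delicate computation with the functionals $u_{j,k}$; the only point that needs care is the bookkeeping showing the crossover index $m$ is at least $1$ — equivalently that $\sprod{u_0,x}=x_t\geq 0$ — which is precisely what makes the union defining $X_t$ start at $m=1$ and forces the removed face to be exactly the single ray $\cone^0(e_t)$ and nothing larger.
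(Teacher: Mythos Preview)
Your proposal is correct and relies on the same ingredients as the paper (Lemma~\ref{lem:cone-description} and, implicitly, the monotonicity of $l\mapsto\sprod{u_l,x}$ established in its proof), but the two arguments are organized differently. The paper proceeds by induction on $k$, proving
\[
  \bigcup_{m=1}^{k} C_m = \mset{x}{x_0\geq\cdots\geq x_{t-1}\geq 0,\ \sprod{u_0,x}\geq 0,\ \sprod{u_k,x}<0}
\]
by gluing $C_{k+1}$ to the previous union along the shared facet $\sprod{u_k,x}=0$, and then lets $k\to\infty$ to see that the constraint $\sprod{u_k,x}<0$ degenerates to $x_0>0$. You instead argue by mutual inclusion and, for $Y\subseteq X_t$, locate the unique $C_m$ containing a given $x$ via the crossover index of the monotone sequence $\sprod{u_l,x}$. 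Your route is slightly more direct and avoids the auxiliary induction; the paper's route has the small bonus of giving an explicit description of every finite partial union $\bigcup_{m=1}^k C_m$ along the way. Either way, the delicate content has already been absorbed into Lemma~\ref{lem:cone-description}, so no obstacle remains.
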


\begin{proof}
Clearly, the difference of cones given in the theorem has the inequality description stated in the theorem. 
It remains to show the first equality asserted in the theorem.  
This follows from Lemma~\ref{lem:cone-description} by observing that consecutive cones $C_m$ and $C_{m+1}$ are ``glued together'' along the shared facet defined by $\sprod{u_{m},x} = 0$, which is open in the former cone and closed in the latter, with opposite orientations $\sprod{u_{m},x} <0$ and $\sprod{u_{m},x}\geq 0$.

Formally, we proceed by induction on $k$.  
For every $k\geq 1$ we have
\[
  \bigcup_{m=1}^k C_m = \mset{x\in\RR^{t+1}}{x_0 \geq \ldots \geq x_{t-1} \geq 0, \sprod{u_0,x} \geq 0, \sprod{u_k,x} < 0}.
\]
For $k=1$, this is the statement of Lemma~\ref{lem:cone-description}.  
Suppose the induction hypothesis holds true for some $k\geq 1$, it follows for $k+1$ by computing
\begin{eqnarray*}
 \bigcup_{m=1}^{k+1} C_m
 &=& \bigcup_{m=1}^{k} C_m \cup C_{k+1} \\ 
 &=& \mset{x\in\RR^{t+1}}{x_0 \geq \ldots \geq x_{t-1} \geq 0, \sprod{u_0,x} \geq 0, \sprod{u_k,x} < 0} \\
  && \cup \mset{x\in\RR^{t+1}}{x_0\geq \ldots \geq x_{t-1}\geq 0, \sprod{u_{k},x} \geq 0, \sprod{u_{k+1},x} < 0 } \\
  &=& \mset{x\in\RR^{t+1}}{x_0 \geq \ldots \geq x_{t-1} \geq 0, \sprod{u_0,x} \geq 0, \sprod{u_k,x} < 0, \sprod{u_{k+1},x} < 0} \\
  && \cup \mset{x\in\RR^{t+1}}{x_0\geq \ldots \geq x_{t-1}\geq 0, \sprod{u_0,x} \geq 0, \sprod{u_{k},x} \geq 0, \sprod{u_{k+1},x} < 0 } \\
  &=& \mset{x\in\RR^{t+1}}{x_0 \geq \ldots \geq x_{t-1} \geq 0, \sprod{u_0,x} \geq 0, \sprod{u_{k+1},x} < 0}
\end{eqnarray*}
where we use both the induction hypothesis and Lemma~\ref{lem:cone-description}.

Next, we observe that the conditions $\sprod{u_0,x} \geq 0$ and $\sprod{u_k,x} < 0$ reduce to
\[
  x_{t} \geq 0 \text{ and }  tx_{k \mmod t} + x_t < ((k \mdiv t) + 1)tx_0.
\]
Allowing $k\rar\infty$, the second constraint is reduced to the condition that $x_0>0$. Notice that given $x_0\geq \ldots \geq x_{t-1}\geq 0$ the condition $x_0 \leq 0$ implies $x_0=\ldots=x_{t-1}=0$.
\end{proof}

\section{Enumerative and Combinatorial Consequences}
\label{sec:bijection}

In the case $t=2$, our description (\ref{eqn:X_2}) of $X_2$ allows us to write the generating function of all $\Lambda$-lattice points in $X_2$ simply as
\[
  \sum_{z \in \Lambda \cap X_2} x^z = \frac{1}{(1-x_1x_2)(1-x_2)(1-x_3^2)} - \frac{1}{(1-x_3^2)} = \left(\frac{1}{(1-x_1x_2)(1-x_2)} - 1\right) \frac{1}{(1-x_3^2)}
\]
where we use the factor $(1-x_3^2)$ instead of $(1-x_3)$ since $\Lambda$ contains only those integer points with even last coordinate. Specializing $x_i=q$ we obtain
\[
  P_2(q) = \frac{1}{(1-q^2)^2(1-q)} - \frac{1}{(1-q^2)} = \frac{(1+q) - (1-q^2)^2}{(1-q^2)^3} = \frac{q+2q^2-q^4}{(1-q^2)^3}
\]
which yields
\begin{eqnarray}
  p(2k,2) &=& 0 \binom{k+2}{2} + 2\binom{k+1}{2} - 1 \binom{k}{2} \label{eqn:quasipolyP_2a}\\
  p(2k+1,2) &=& 1 \binom{k+2}{2} + 0\binom{k+1}{2} + 0 \binom{k}{2}.\label{eqn:quasipolyP_2b}
\end{eqnarray}

In just the same way, we can obtain Theorem~\ref{thm:bounded-differences} as an immediate corollary of Theorem~\ref{thm:main-theorem}.

\begin{proof}[Proof of Theorem~\ref{thm:bounded-differences}]
The generating function of
\begin{eqnarray*}
S_1 :=\Lambda\cap \cone^0(\mmat{ 
            1 & 1 & \cdots    & 1      & 0 \\
              & 1 &           & 1      & 0 \\
              &   & \ddots & \vdots & \vdots \\
              &   &           & 1      & 0 \\
              &   &           &        & 1
        })
&
\;\;\text{ is }\;\;
&
  \sum_{v\in S_1} x^v = \frac{1}{(1-x^{b_0})\cdot\ldots\cdot(1-x^{b_{t-1}})(1-x_t^t)}
\end{eqnarray*}
and the generating function of 
\begin{eqnarray*}
S_2 := \Lambda\cap \cone^0(\mmat{
  0 \\ \vdots \\ 0 \\\ 1
})
&
\;\;\text{ is }\;\;
  \sum_{v\in S_1} x^v = \frac{1}{1-x_t^t}.
\end{eqnarray*}
Applying Theorem~\ref{thm:main-theorem} we find
\[
  \sum_{z \in \Lambda \cap X_t} x^z = \frac{1}{(1-x^{b_0})\cdot\ldots\cdot(1-x^{b_{t-1}})(1-x_t^t)} - \frac{1}{1-x_t^t}.
\]
Due to (\ref{eqn:link-bounded-differences-and-geometry}) we can specialize $x_0=x_1=\ldots=x_t=q$ to obtain the desired identity
$$P_t(q) = \frac{1}{(1-q)(1-q^2)\cdot\ldots\cdot(1-q^t)^2} - \frac{1}{1-q^t}.$$ 
\end{proof}

Theorem~\ref{thm:main-theorem} not only implies this arithmetic corollary, but it moreover leads to a bijective proof of Theorem~\ref{thm:bounded-differences}: 
We can interpret (\ref{eqn:inf-sum}) as counting partitions with bounded differences and (\ref{eqn:rat-fun}) as counting pairs $(\lambda,\ell)$ where $\lambda$ is a non-empty partition with largest part at most $t$ and $\ell$ is a non-negative multiple of $t$.  
Our geometric construction of $X_t$ then leads directly to combinatorial bijection between these two classes.

\begin{theorem}
\label{thm:bijection}
For fixed $t\geq 1$ and any $n\in\ZZ_{\geq 1}$, the number $p(n,t)$ of partitions of $n$ with difference between largest and smallest part at most $t$ equals the number of pairs $(\lambda,\ell)$ where $\ell\in\ZZ_{\geq 0}$ is divisible by $t$ and $\lambda$ is a non-empty partition of $n-\ell$ with largest part at most $t$. Moreover, there is an explicit bijection between these sets.
\end{theorem}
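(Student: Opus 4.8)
The plan is to read the bijection off the geometry of Theorem~\ref{thm:main-theorem} and then restate it combinatorially; throughout, $\lambda$ denotes the bounded-difference partition and $\mu$ the ``small'' partition written $\lambda$ in the statement. Two lattice-point dictionaries are needed. First, since the $C_m$ are pairwise disjoint with union $X_t$ and $\Lambda=V_m\ZZ^{t+1}$ for every $m$, each $z\in\Lambda\cap X_t$ lies in a unique $C_m$ and has a unique expansion $z=\sum_{i=0}^t k_i v_{m+i}$ with $k_0\geq 1$ and $k_1,\dots,k_t\geq 0$; as $|v_{m+i}|=m+i$, the rule $\phi(z):=(m+t)^{k_t}+\dots+m^{k_0}$ is, by \eqref{eqn:inf-union}, a coordinate-sum preserving bijection $\Lambda\cap X_t\to\PPP_t$. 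Second, the vectors $b_0,\dots,b_{t-1}$ (padded by a final $0$) form a $\ZZ$-basis of $\ZZ^t\times\{0\}$, so every $z\in S_1$ has a unique expansion $z=\sum_{j=0}^{t-1}a_j b_j+\ell\,e_t$ with $a_j\in\ZZ_{\geq 0}$ and $\ell\in t\ZZ_{\geq 0}$; as $|b_j|=j+1$, the rule $\psi(z):=(\mu,\ell)$ where $\mu$ has part $j+1$ with multiplicity $a_j$ is a coordinate-sum preserving bijection from $S_1$ onto the set of pairs $(\mu,\ell)$ with $\mu$ a possibly-empty partition with largest part $\leq t$ and $\ell$ a non-negative multiple of $t$. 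Under $\psi$, the points of $S_2$ are exactly those mapped to pairs with $\mu=\emptyset$.

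Next I would combine the two. By Theorem~\ref{thm:main-theorem} we have the set identity $\Lambda\cap X_t=S_1\setminus S_2$, so $\psi$ restricts to a bijection from $\Lambda\cap X_t$ onto the pairs $(\mu,\ell)$ as above with $\mu$ \emph{non-empty}. Composing, $\beta:=\psi\circ\phi^{-1}$ is a coordinate-sum preserving bijection from $\PPP_t$ onto this set of pairs; restricting to coordinate sum $n$ and using $|\phi(z)|=|z|=|\mu|+\ell$ together with \eqref{eqn:link-bounded-differences-and-geometry} yields precisely the claimed bijection between $\PPP_t(n)$ and the pairs with $|\mu|+\ell=n$. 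This already establishes the existence statement.

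To obtain an \emph{explicit} and purely combinatorial description of $\beta$, I would unwind the coordinate change between the two generator systems. Writing $m-1=qt+r$ with $0\leq r<t$, the matrices $M_{r,q}$ from the proof of Lemma~\ref{lem:cone-description} show that the ``$b$-part'' of $v_{m+i}$ is $b_{(r+i)\bmod t}$; solving $\sum_i k_i v_{m+i}=\sum_j a_j b_j+\ell e_t$ then gives that in $\mu$ the part $j+1$ has multiplicity $k_{(j-r)\bmod t}$ for $j\neq r$ and multiplicity $k_0+k_t$ for $j=r$, while $\ell=|\lambda|-|\mu|$; in particular $\beta$ preserves the number of parts. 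For the inverse, given $(\mu,\ell)$ with part-multiplicities $a_0,\dots,a_{t-1}$ and $K:=\sum_j a_j$ parts, one would recover $\lambda$ by letting $r$ be the unique index with $\sum_{j<r}a_j\leq(\ell/t)\bmod K\leq\big(\sum_{j\leq r}a_j\big)-1$, then setting $k_t:=\big((\ell/t)\bmod K\big)-\sum_{j<r}a_j$, $k_0:=a_r-k_t$, $q:=\floor{(\ell/t)/K}$, $m:=qt+r+1$, and $k_i:=a_{(i+r)\bmod t}$ for $0<i<t$, so that $\lambda=(m+t)^{k_t}+\dots+m^{k_0}$.

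Finally I would check that these two maps are mutually inverse. The coordinate-by-coordinate computations are routine and are essentially contained in the proof of Lemma~\ref{lem:cone-description}. The one genuinely substantive point — and the place where a self-contained combinatorial argument must re-use, or re-prove, the tiling content of Theorem~\ref{thm:main-theorem} — is the well-definedness of the inverse: one needs that the half-open intervals $\big[\,\sum_{j<r}a_j,\ \big(\sum_{j\leq r}a_j\big)-1\,\big]$ for $r=0,\dots,t-1$ partition $\{0,1,\dots,K-1\}$, which then pins down $r$, then $k_t$ with $0\leq k_t\leq a_r-1$, and then $k_0=a_r-k_t\geq 1$, all uniquely. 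This is immediate from the fact that the partial sums $0=\sum_{j<0}a_j\leq\sum_{j<1}a_j\leq\cdots\leq\sum_{j<t}a_j=K$ run from $0$ to $K$; geometrically it is exactly the statement that the cones $C_m$ tile $X_t$ without gaps or overlaps. I expect the bookkeeping around the wrap-around index $r$ (equivalently, the residue modulo $t$ of the smallest part of $\lambda$) and the splitting of the multiplicity $a_r$ into $k_0+k_t$ to be the main obstacle to a clean write-up, though no idea beyond Theorem~\ref{thm:main-theorem} is required.
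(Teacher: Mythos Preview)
Your proposal is correct and is essentially the paper's own argument: both factor the bijection through $\Lambda\cap X_t$, reading off the bounded-difference partition via the generators $v_{m+i}$ of the unique $C_m$ containing the lattice point (your $\phi$, the paper's map $f$ in Lemma~\ref{thm:bijection-piece}) and the pair $(\mu,\ell)$ via the generators $b_j,e_t$ of $S_1$ (your $\psi$), with your interval-partitioning check for the inverse being exactly the combinatorial content of the paper's equation~\eqref{eqn:xt}. The only differences are cosmetic---the paper visualizes the map with augmented Ferrers diagrams rather than coordinate formulas---plus one notational collision: your use of $q$ for the quotient $\lfloor(m-1)/t\rfloor$ clashes with the formal variable $q$.
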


Just as both the sum in (\ref{eqn:inf-sum}) and our geometric tiling in Theorem~\ref{thm:main-theorem} of $X_t$ with cones $C_m$ suggest, our proof of this result will proceed by constructing a bijection piece-by-piece. On one hand, for each $m\geq1$ the corresponding summand in (\ref{eqn:inf-sum}) is readily interpreted as the generating function of the set $\tilde{C}_m$ of partitions $\lambda$ with smallest part $m$ and difference between smallest and largest part at most $t$. 
On the other hand, we can infer a combinatorial interpretation of $C_m\cap\Lambda$ from the polyhedral model $C_m=\cone^{e_1}V_m$. Let $x=(x_0,\ldots,x_{t-1},x_{t})\in\Lambda\cap C_m$. From the inequality description of $C_m$ we know that $(x_0,\ldots,x_{t-1})$ is a weakly decreasing vector of non-negative integers. This we can interpret as a partition $\mu=(x_0,\ldots,x_{t-1})$ with at most $t$ parts. Its conjugate $\bar{\mu}$ is then a partition with largest part at most $t$. Moreover, $x_{t}$ is a non-negative integer divisible by $t$. While any non-empty partition $\bar{\mu}$ with largest part at most $t$ can appear, we will have to work some more to understand which integers $x_{t}$ can be paired with this partition. Let $j=m-1\mmod t$ and $\tilde{j}= t\left(\floor{\frac{m-1}{t}}+1\right)$ and write
\[
  x = \alpha_j v_m + \ldots + \alpha_{t-1} v_{\tilde{j}} + \alpha^*_0 v_{\tilde{j} + 1} + \ldots + \alpha_j^* v_{m+t}.
\]
In particular, focusing on the first $t$ rows of this system of equations, we get
\[
  \mu = \alpha_0^* b_0 + \ldots + \alpha_{j-1}^* b_{j-1} + (\alpha_j^* + \alpha_j)b_j + \alpha_{j+1}b_{j+1} + \ldots + \alpha_{t-1} b_{t-1}
\]
which means that 
\[
\mmat{\alpha_0^* \\ \vdots \\ \alpha_{j-1}^* \\\alpha_j^* + \alpha_j\\\alpha_{j+1}\\\vdots\\\alpha_{t-1}} = \mmat{\hat{\mu}_1\\\vdots\\\hat{\mu}_j\\\hat{\mu}_{j+1}\\\hat{\mu}_{j+2}\\\vdots\\\hat{\mu}_t}
\] 
where $\hat{\mu}_i$ denotes the multiplicity of the part of size $i$ in $\bar{\mu}$. 

\begin{figure}[t]
\begin{center}
\includegraphics[angle=270,width=14cm]{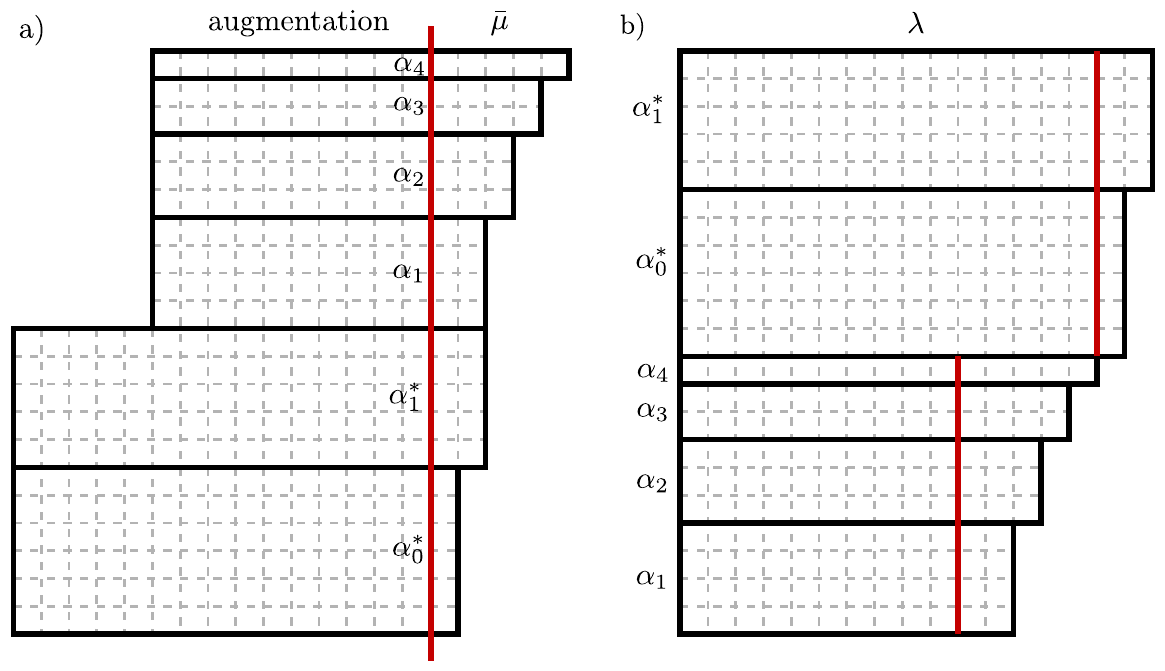}
\end{center}
 \caption[]{\label{fig:bijection} a) The pair $(\bar{\mu},x_t)$ represented as an augmented Ferrers diagram. b) The corresponding partition $f(\bar{\mu},x_t)=\lambda$ represented as a Ferrers diagram.}
\end{figure}

This is best illustrated with an example, see Figure~\ref{fig:bijection}a). Let $t=5$ and $m=12$ so that $j=1$ and $\tilde{j}=15$. Let $x=(21,16,6,3,1,53\cdot 5)$ so that $\bar{\mu}=5^1+4^2+3^3+2^9+1^6$, where we use superscripts to denote multiplicity of parts, and thus $\hat{\mu}=(6,9,3,2,1)$. Here
\[
  x = \mmat{\mu\\x_{t}} = 
  \underbrace{4}_{\alpha_1} \underbrace{\mmat{b_1\\2\cdot 5}}_{v_{12}} + 
  \underbrace{3}_{\alpha_2} \underbrace{\mmat{b_2\\2\cdot 5}}_{v_{13}} +
  \underbrace{2}_{\alpha_3} \underbrace{\mmat{b_3\\2\cdot 5}}_{v_{14}} +
  \underbrace{1}_{\alpha_4} \underbrace{\mmat{b_4\\2\cdot 5}}_{v_{15}} +
  \underbrace{6}_{\alpha_0^*} \underbrace{\mmat{b_0\\3\cdot 5}}_{v_{16}} +
  \underbrace{5}_{\alpha_1^*} \underbrace{\mmat{b_1\\3\cdot 5}}_{v_{17}}.
\]
This can be visualized, as in Figure~\ref{fig:bijection}a), by an augmented Ferrers diagram. On the right of the vertical line we have the Ferrers diagram of $\bar{\mu}$. The $\alpha_i^{(*)}$ give the multiplicity with which the part of size $i+1$ (i.e., the row of length $i+1$) appears. The part of size $(j+1)$ plays a special role in that its multiplicity is given by $\alpha_j+\alpha_j^*$. Attached to each row of $\bar{\mu}$, we have a certain multiple of $t$ which we represent by rows of additional boxes which extend to the left of the vertical line.

Returning to the question which $x_t$ are possible for a given $\mu$, we observe that for a fixed $\mu$ the only choice we have for $x_t$ arises from $\alpha_j^*+\alpha_j=\hat{\mu}_{j+1}$ given $\alpha_j^*\geq 0$ and $\alpha_j>0$. Thus, given a fixed $\mu$ the values of $x_t$ that can appear for $\mmat{\mu\\ x_t}\in C_m$ are determined by
\begin{eqnarray}
\label{eqn:xt}
  \frac{x_t}{t} - \floor{\frac{m}{t}}\left(\sum_{i=1}^t \hat{\mu}_i \right) - \left(\sum_{i=j+2}^t \hat{\mu}_i \right) &\in& \{0,\ldots,\hat{\mu}_{j+1}-1\}.
\end{eqnarray}
Thus, the set $\Lambda\cap C_m$ can be interpreted combinatorially as the set of pairs $(\bar{\mu},x_t)$ of a non-empty partition $\bar{\mu}$ with largest part at most $t$ and a number $x_t$ which satisfies (\ref{eqn:xt}).

Now that we have extracted this non-obvious definition from the geometric construction it is a straightforward matter to give a bijection $f$ between $\Lambda\cap C_m$ and $\tilde{C}_m$, as illustrated in Figure~\ref{fig:bijection}.

\noindent {\bf The Bijection:}
Given a pair $(\bar{\mu},x_t)$ in $\Lambda\cap C_m$, consider its augmented Ferrers diagram. In Figure~\ref{fig:bijection}a), the augmentations to the right of the vertical line come in two different sizes. Between these two sections we make a horizontal cut in the augmented Ferrers diagram and place the bottom part on top so that all rows are flush left, as shown in Figure~\ref{fig:bijection}b). The result is the Ferrers diagram of a partition $f(\bar{\mu},x_t)=\lambda$ where the difference between smallest and largest part is at most $t$ and the smallest part is exactly $m$. In the example, $f(\bar{\mu},x_t)=\lambda=17^5+16^6+15^1+14^2+13^3+12^4$. 

Formally, $f$ maps $(\bar{\mu},x_t)$ to the partition
\[
  \lambda = \left(j+1+t\left(\floor{\frac{m}{t}}+1\right)\right)^{\alpha_j^*} + \ldots + \left(1+t\left(\floor{\frac{m}{t}}+1\right)\right)^{\alpha_0^*} + \left(t+t\floor{\frac{m}{t}}\right)^{\alpha_{t-1}} + \ldots + \left(j+1+t\floor{\frac{m}{t}}\right)^{\alpha_{j}}.
\]

The inverse operation can be performed simply by ``cutting'' the Ferrers diagram of $\lambda$ horizontally between parts of size at least $1+t\left(\floor{\frac{m}{t}}+1\right)$ and parts of size at most $t+t\floor{\frac{m}{t}}$.  
Rearranging the (augmented) Ferrers diagram in this fashion preserves the sum of coordinates in the vectors $x$ and $\lambda$, i.e., it is \emph{height-preserving}. We summarize this result in the following lemma.

\begin{lemma}
\label{thm:bijection-piece}
The map $f$ defined above is a height-preserving bijection between $\Lambda\cap C_m$ and $\tilde{C}_m$.
\end{lemma}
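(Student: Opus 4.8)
\emph{Proof proposal.} The plan is to extract everything from the explicit parametrization of $\Lambda\cap C_m$ worked out just above and then to track what the cut-and-restack operation $f$ does to that data. Recall $j=(m-1)\mmod t$ and set $q:=(m-1)\mdiv t$, so that $m=(j+1)+tq$ and $\tilde{j}=t(q+1)$. Every $x\in\Lambda\cap C_m$ has a \emph{unique} expansion
\[
  x=\alpha_j v_m+\dots+\alpha_{t-1}v_{\tilde{j}}+\alpha_0^* v_{\tilde{j}+1}+\dots+\alpha_j^* v_{m+t},
  \qquad \alpha_i,\alpha_i^*\geq 0,\ \alpha_j>0,
\]
because $v_m,\dots,v_{m+t}$ are linearly independent and the facet opposite $v_m$ is the open one; and this data is exactly the pair $(\bar{\mu},x_t)$, with $\alpha_i^*=\hat{\mu}_{i+1}$ for $i<j$, $\alpha_j+\alpha_j^*=\hat{\mu}_{j+1}$, $\alpha_i=\hat{\mu}_{i+1}$ for $i>j$, and $x_t$ recording the particular splitting of $\hat{\mu}_{j+1}$ into $\alpha_j\geq 1$ and $\alpha_j^*\geq 0$ as in (\ref{eqn:xt}). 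There are then three things to verify: that $f(\bar{\mu},x_t)$ really lies in $\tilde{C}_m$, that $f$ preserves height, and that $f$ is a bijection.

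For well-definedness I would simply read off the part sizes of $\lambda=f(\bar{\mu},x_t)$ from the displayed formula for $\lambda$, using the identity $tq=m-1-j$. A short computation shows that the generator $v_{m+i}$ (for $0\leq i\leq t-1-j$) produces the part $m+i$ with multiplicity $\alpha_{j+i}$, while the generator $v_{\tilde{j}+1+i}$ (for $0\leq i\leq j$) produces the part $m+t-j+i$ with multiplicity $\alpha_i^*$. The index sets $\{m,\dots,m+t-1-j\}$ and $\{m+t-j,\dots,m+t\}$ partition $\{m,\dots,m+t\}$, so $\lambda$ is a genuine partition all of whose parts lie in that interval; and because $\alpha_j>0$ the part $m$ actually occurs, so the smallest part of $\lambda$ is exactly $m$ and hence $\lambda\in\tilde{C}_m$.

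Height preservation is then immediate: cutting the augmented Ferrers diagram and restacking merely permutes its rows, so it changes neither the total number of boxes nor the multiset of row lengths, whence $|\lambda|$ equals the total box count $|\bar{\mu}|+x_t=|\mu|+x_t=\sum_{i=0}^t x_i$ of the augmented diagram (conjugation preserves $|\mu|$); equally well one can just sum the part sizes found above against their multiplicities. For bijectivity I would exhibit the inverse explicitly: given $\lambda\in\tilde{C}_m$, separate its parts into those of size $\geq m+t-j$ (the ``top'' block) and those of size $\leq m+t-1-j$ (the ``bottom'' block), subtract $t(q+1)$ from each top part and $tq$ from each bottom part to recover the $b$-parts, and read off their multiplicities to recover all the $\alpha_i,\alpha_i^*$ — in particular $\alpha_j$ from the multiplicity of $m$ and $\alpha_j^*$ from that of $m+t$ — hence $\bar{\mu}$ by conjugation and $x_t$ from (\ref{eqn:xt}). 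It then remains to check that this map lands in $\Lambda\cap C_m$ (the only point needing comment being $\alpha_j\geq 1$, which holds because $m$ is a part of every $\lambda\in\tilde{C}_m$) and that it is two-sided inverse to $f$; both are clear once one notes that the forward and backward operations are literally inverse rearrangements of diagrams.

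The one place that genuinely needs care is the ``seam'': the part $b_j$ is manufactured twice — once as the bottom part $m$ with multiplicity $\alpha_j$, and once as the top part $m+t$ with multiplicity $\alpha_j^*$ — so one must make sure these two contributions are amalgamated correctly into $\hat{\mu}_{j+1}=\alpha_j+\alpha_j^*$ in the forward direction, and, conversely, that the datum $x_t$ records precisely the split $\alpha_j+\alpha_j^*=\hat{\mu}_{j+1}$ with $\alpha_j\geq 1$ that is needed to rebuild $\lambda$ uniquely. Granting that bookkeeping, every remaining step is a routine verification on Ferrers diagrams.
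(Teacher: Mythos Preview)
Your proposal is correct and follows precisely the approach the paper takes: the paper does not give a separate formal proof of this lemma but instead absorbs it into the surrounding discussion, describing the cut-and-restack operation on the augmented Ferrers diagram, writing down $\lambda$ explicitly, and noting that the inverse is obtained by cutting $\lambda$ between the ``long'' and ``short'' parts. Your write-up simply makes each of these steps (well-definedness, height preservation, explicit inverse, and the bookkeeping at the seam $\hat{\mu}_{j+1}=\alpha_j+\alpha_j^*$) more explicit than the paper does; in particular your consistent use of $q=(m-1)\mdiv t$ avoids an apparent slip in the paper's displayed formula for $\lambda$, where $\lfloor m/t\rfloor$ should really be $\lfloor (m-1)/t\rfloor$.
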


We now prove Theorem ~\ref{thm:bijection}.

\begin{proof}[Proof of Theorem~\ref{thm:bijection}]
By construction, we have $\PPP_t=\bigcup_{m\geq1} \tilde{C}_m$. From Theorem~\ref{thm:main-theorem} we know $\Lambda\cap X_t=\bigcup_{m\geq 1} \Lambda\cap C_m$ where this union is disjoint. Using Lemma~\ref{thm:bijection-piece}, we obtain a height-preserving bijection between $\PPP_t$ and $\Lambda\cap X_t$ that is given piecewise between $\Lambda\cap C_m$ and $\tilde{C}_m$ for all $m\geq 1$.
\end{proof}

Note that it is not necessary to invoke Theorem~\ref{thm:main-theorem} to prove Theorem~\ref{thm:bijection}. Let $(\bar{\mu},x_t)$ be a pair consisting of a non-empty partition $\bar{\mu}$ with largest part at most $t$ and a non-negative integer $x_t$ divisible by $t$.  
To show that each such pair $(\bar{\mu},x_t)$ lies in a unique $\Lambda\cap C_m$ it suffices to observe that for every such pair there is a unique $m$ such that (\ref{eqn:xt}) holds. 
Intuitively, given $x_t$, we augment the Ferrers diagram of $\bar{\mu}$ by adding rows of boxes on the left which are subject to the following constraints:
\begin{enumerate}[label=\roman*)]
\item The number of boxes in each row has to be a multiple of $t$.
\item Only two different multiples of $t$ may appear.
\item The long rows always have to be at the bottom.
\end{enumerate} 
This has a unique solution due to the convention $\alpha_j>0$. Using this argument, it is possible to make the bijective proof entirely combinatorial.  The strength of the polyhedral geometry approach is that it provided the intuition necessary to define the $C_m$ and thus led us to this combinatorial insight.

\section{Bounded Differences vs.\ Fixed Differences}
\label{sec:bounded-vs-fixed}

A partition $\lambda$ has \emph{fixed difference $t$} if the difference between the largest part and smallest part of $\lambda$ is exactly equal to $t$. Let $\tilde{p}(n,t)$ denote the number of partitions of $n$ with fixed difference $t$ and let $\tilde{P}_t(q) = \sum_{n\geq 1}\tilde{p}(n,t)q^n$ denote the corresponding generating function. A recent paper by Andrews, Beck, and Robbins \cite{Andrews2014} proves the following result:
\begin{theorem}[Andrews--Beck--Robbins \cite{Andrews2014}]
\label{thm:andrews}
For all $t > 1$,
\begin{eqnarray}
\label{eqn:andrewsrat-fun}
\tilde{P}_t(q) = q^t\sum_{m\geq 1}\frac{q^{2m}(q)_{m-1}}{(q)_{m+t}}
=\frac{q^{t-1}(1-q)}{(1-q^{t-1})(1-q^t)} - \frac{q^{t-1}(1-q)}{(1-q^{t-1})(1-q^t)(q)_t}  + \frac{q^t}{(1-q^{t-1})(q)_t}.
\end{eqnarray}
\end{theorem}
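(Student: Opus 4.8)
The plan is to relate the fixed-difference generating function $\tilde{P}_t(q)$ to the bounded-difference functions $P_t(q)$ and $P_{t-1}(q)$ through a simple inclusion–exclusion observation, then invoke Theorem~\ref{thm:bounded-differences} to obtain a closed form, and finally massage that closed form into the stated expression. The key elementary fact is that a partition has difference between largest and smallest part \emph{exactly} $t$ if and only if it has bounded difference $t$ but not bounded difference $t-1$; for $t\geq 1$ this gives the set-theoretic decomposition $\PPP_t = \PPP_{t-1} \sqcup \{\lambda : \text{fixed difference } t\}$, hence
\begin{eqnarray*}
\tilde{P}_t(q) = P_t(q) - P_{t-1}(q)
\end{eqnarray*}
for all $t\geq 2$ (the case $t=1$ being degenerate since $P_0(q)$ is not rational, which is exactly why the Andrews--Beck--Robbins theorem is stated for $t>1$).

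Next I would substitute the formula from Theorem~\ref{thm:bounded-differences}. Writing $(q)_k = (1-q)(1-q^2)\cdots(1-q^k)$ so that $P_t(q) = \bigl(\tfrac{1}{(q)_t} - 1\bigr)\tfrac{1}{1-q^t}$, we get
\begin{eqnarray*}
\tilde{P}_t(q) = \left(\frac{1}{(q)_t} - 1\right)\frac{1}{1-q^t} - \left(\frac{1}{(q)_{t-1}} - 1\right)\frac{1}{1-q^{t-1}}.
\end{eqnarray*}
The remaining work is purely algebraic: put everything over the common denominator $(1-q^{t-1})(1-q^t)(q)_t$, using $(q)_t = (q)_{t-1}(1-q^t)$ to rewrite the terms coming from $P_{t-1}$, collect numerators, and check that the result equals the three-term expression on the right-hand side of (\ref{eqn:andrewsrat-fun}). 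I would verify the identity of the first ``analytic'' form $q^t\sum_{m\geq 1} q^{2m}(q)_{m-1}/(q)_{m+t}$ separately, either by recognizing it as the standard generating function $\tilde{P}_t(q)$ obtained by classifying fixed-difference-$t$ partitions by their smallest part $m$ (the part $m$ and the part $m+t$ each appear at least once, parts $m+1,\ldots,m+t-1$ freely), or simply by noting it is $P_t(q) - P_{t-1}(q)$ term by term, since the $m$-th summand of $P_t(q) - P_{t-1}(q)$ is $\tfrac{q^m}{(q^m)_{t+1}} - \tfrac{q^m}{(q^m)_t} = \tfrac{q^m(1 - (1-q^{m+t}))}{(q^m)_{t+1}} = \tfrac{q^{2m+t}}{(q^m)_{t+1}}$, which after shifting matches $q^t q^{2m}(q)_{m-1}/(q)_{m+t}$.

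The conceptual step — the decomposition $\tilde{P}_t = P_t - P_{t-1}$ — is immediate, so there is no genuine obstacle; the only place demanding care is the final rational-function simplification, where one must correctly track the factor $(1-q^{t-1})$ versus $(1-q^t)$ and the difference between $(q)_{t-1}$ and $(q)_t$. A clean way to organize this is to first clear the ``$-1$'' contributions, noting they telescope partially, and handle $\tfrac{1}{(q)_t(1-q^t)} - \tfrac{1}{(q)_{t-1}(1-q^{t-1})} = \tfrac{1}{(q)_{t-1}}\bigl(\tfrac{1}{(1-q^t)^2} - \tfrac{1}{1-q^{t-1}}\bigr)$ in one block and $-\tfrac{1}{1-q^t} + \tfrac{1}{1-q^{t-1}}$ in another, then recombine. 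I expect that matching the resulting two- or three-fraction expression against the specific grouping chosen in \cite{Andrews2014} will require one partial-fraction rearrangement, but nothing deeper.
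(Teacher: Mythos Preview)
Your proposal is correct and is precisely the route the paper outlines: the paper does not prove Theorem~\ref{thm:andrews} directly but cites \cite{Andrews2014} for the original $q$-series proof, and then remarks that with the identity $\tilde{P}_t(q)=P_t(q)-P_{t-1}(q)$ in hand ``only elementary arithmetic is needed to show the direct correspondence between Theorem~\ref{thm:bounded-differences} and Theorem~\ref{thm:andrews}.'' Your term-by-term computation of $P_t-P_{t-1}$ to recover the infinite-sum form and your plan for the rational-function simplification are exactly the elementary arithmetic the paper alludes to but does not write out.
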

Just as in the case of partitions with bounded differences, this formula has the surprising feature that an infinite sum of rational functions is reduced to a single rational function.  
The methods used to obtain this reduction are $q-$series arguments that include the use of $q-$binomial coefficients and an application of Heine's transformation.   
Because $\tilde P_t(q)$ is a rational function, it follows for $t>1$ that $\tilde p(n,t)$ is a quasipolynomial and closed term formulas for fixed $t$ are easily obtained similarly to (\ref{eqn:quasipolyP_2a}) and (\ref{eqn:quasipolyP_2b}) in this paper.   

Partitions with bounded differences and partition with fixed differences are related quite simply. If $t=0$, then these two notions are equivalent and, in particular, $\tilde{p}(n,0)=p(n,0)$. If $t\geq 1$, then it follows directly from the respective definitions that
\begin{eqnarray}
  \tilde{P}_t(q) = P_t(q) - P_{t-1}(q) \;\;\;\; &\text{ and }& \;\;\;\; \tilde{p}(n,t) = p(n,t) - p(n,t-1). \label{eqn:fixed}
\end{eqnarray}

\begin{figure}[t]
\begin{center}
\includegraphics[angle=270,width=14cm]{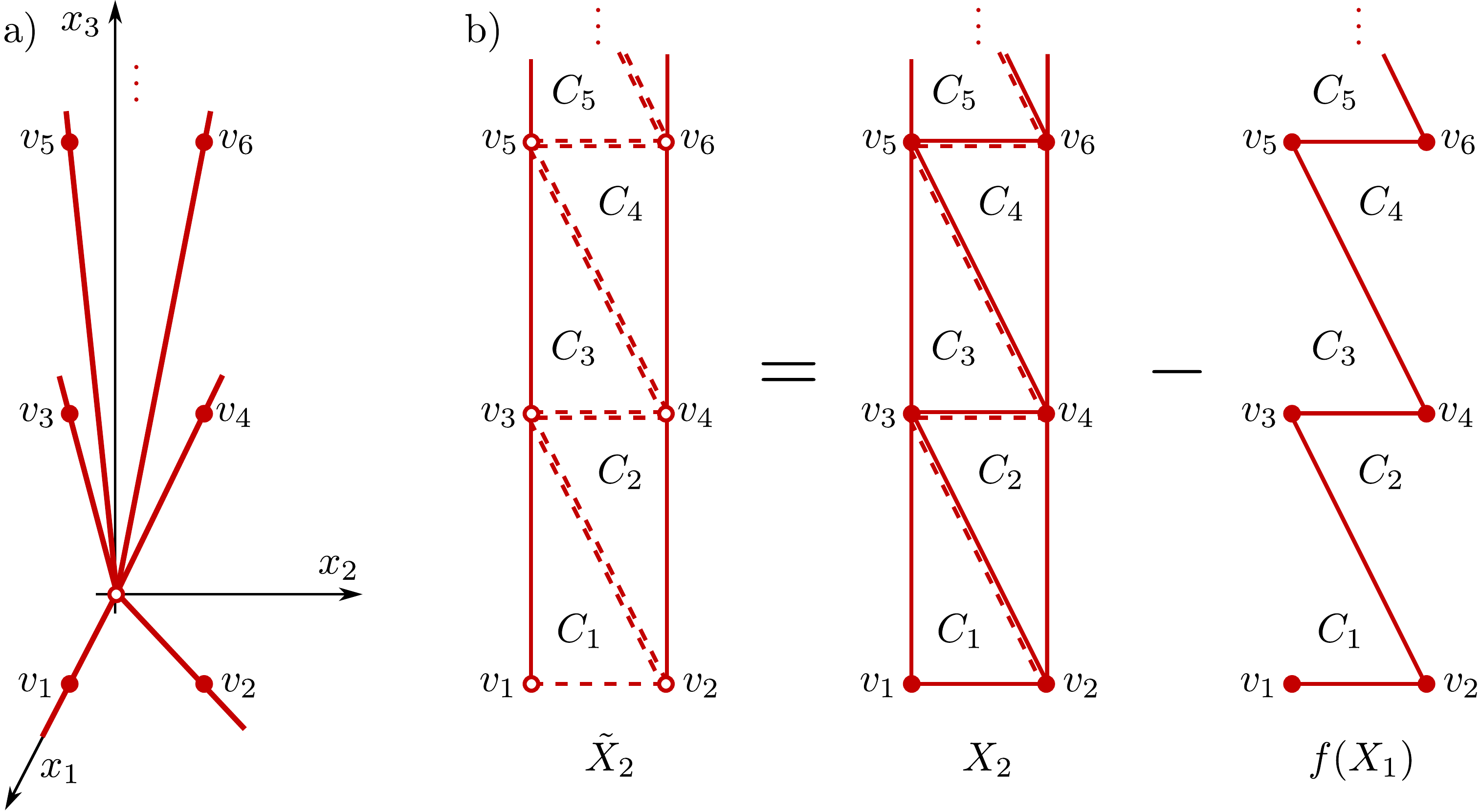}
\end{center}
 \caption[]{\label{fig:fixed} Fixed differences for $t=2$. a) The basic construction is identical to the case of bounded differences. b) In the case of fixed difference, the constituent cones have two open faces. The geometric model $\tilde{X}_2$ can thus be viewed as $X_2$ with a piecewise linear transformation of $X_1$ removed.}
\end{figure}

The inclusion-exclusion formulas (\ref{eqn:fixed}) can be visualized on the geometric level, as shown in Figure~\ref{fig:fixed}. Following the same construction as in Section~\ref{sec:polyhedral-model} we obtain a polyhedral model $\tilde{X}_t$ for $\tilde{P}_t(q)$ as an infinite union of $(t+1)$-dimensional cones that each have two open facets. In contrast to the bounded differences case, $\tilde{X}_t$ is not itself a simplicial cone with some open faces. Instead,
\[
  \tilde{X}_t = X_t \setminus f(X_{t-1})
\]
where $f$ is a piecewise linear of the form 
\[
  f(X_{t-1}) = \bigcup_{m\geq 0} f_m(C_m^{t-1})
\]
where the $C_m^{t-1}$ are the constituent simplicial cones of the model $X_{t-1}$ and the $f_m$ are unimodular linear maps, as can be seen in Figure~\ref{fig:fixed} for $t=2$. This construction shows that $\tilde{P}_t(q)$ is a rational function via a geometric argument, thus answering a question posed to the authors by George Andrews. At the same time Figure~\ref{fig:fixed} makes clear that from the geometric perspective the bounded difference setting is more natural to work with.

With the identities (\ref{eqn:fixed}) in hand, results about bounded differences can be easily converted into results about fixed differences and vice versa. In particular, only elementary arithmetic is needed to show the direct correspondence between Theorem~\ref{thm:bounded-differences} and Theorem~\ref{thm:andrews}.

\section{Conclusion}
\label{sec:conclusion}
Recalling the advances of J.J. Sylvester and others, Theorem~\ref{thm:bounded-differences}, hence, Theorem \ref{thm:andrews}, can be obtained {\it constructively, without the aid of analysis} \cite{Dickson}.
In this article we have modeled the set of partitions with difference between largest and smallest part bounded by $t$ as the set of integer points in a half-open simplicial cone in $(t+1)$-dimensional space. This is remarkable because it is not immediate from the definition of these partitions that they have a linear model in a fixed-dimensional space at all. Yet, the geometric model is surprisingly natural, given how neatly the cones $C_m$ fit together to form the simplicial cone $X_t$. In particular, this explains geometrically why $P_t(q)$ is a rational function and, more specifically, why the infinite sum of rational functions (\ref{eqn:inf-sum}) simplifies to the single rational function (\ref{eqn:rat-fun}). Moreover, the geometric construction leads naturally to a bijective proof of this identity. From a combinatorial perspective, this bijection is interesting because it is not obvious combinatorially and yet arises directly from the polyhedral model. From a geometric perspective, this bijection underlines the importance of piecewise linear transformations of polyhedral models of combinatorial counting functions. 

At least three questions for future research present themselves:
\begin{enumerate}
\item The geometric methods developed in this article can also be applied to counting partitions with specified distances as introduced in \cite{Andrews2014}. However, just as discussed in Section~\ref{sec:bounded-vs-fixed} the resulting polyhedral models will involve inclusion-exclusion, which makes a geometric treatment of specified distances a priori unwieldy. What is a good analogue of specified distances in the bounded differences setting that leads to a convex polyhedral model?

\item The inductive proof of Theorem~\ref{thm:main-theorem} can be translated into an inductive simplification of the infinite sum (\ref{eqn:inf-sum}) to the rational function (\ref{eqn:rat-fun}). What is the relation of this polyhedral construction to (anti-)telescoping methods in partition theory such as \cite{Andrews2013}? 

\item The proof of Theorem~\ref{thm:andrews} in \cite{Andrews2014} utilizes the Heine transformation. Is there a polyhedral construction that would provide a multivariate generalization of the Heine transformation? 
\end{enumerate}

Polyhedral models have proven to be a useful tool in combinatorics \cite{Breuer,Beck2006-iop,BBGM2014} and in partition theory \cite{BBKSZ1,BZ2015}. In particular, they can help in the construction of bijective proofs for partition identities, as the present article demonstrates, and even in the construction of combinatorial witnesses for partition congruences \cite{BEK-6j-1}. Polyhedral methods have great potential for further applications in this area and we look forward to more such applications in future research.

\bigskip
\paragraph{{\textbf{Acknowledgments}}} The authors wish to thank George Andrews and Peter Paule for suggesting the topic of fixed differences and asking if polyhedral methods can show that the generating function for partitions with fixed differences is a rational function.

\setlength{\bibsep}{0pt} 
\renewcommand{\bibfont}{\small} 
\bibliographystyle{abbrv}
\bibliography{references}

\end{document}